\documentclass[12pt, amscd]{amsart}
\usepackage{amscd}
\usepackage{verbatim}

\input xy
\xyoption{all}

\usepackage{amssymb}

\textwidth 14cm \textheight 22cm \headheight 0.5cm \evensidemargin
1.25cm \oddsidemargin 1.25cm

\DeclareMathAlphabet{\cat}{OT1}{cmss}{m}{sl}

\newtheorem{theorem}{Theorem}[section]

\newtheorem{lemma}[theorem]{Lemma}
\newtheorem{corollary}[theorem]{Corollary}

\theoremstyle{definition}
\newtheorem{remark}[theorem]{Remark}

\newtheorem{example}[theorem]{Example}






\newcommand{\CH}{\operatorname{CH}}

\newcommand{\Ker}{\operatorname{Ker}}

\newcommand{\gSpin}{\operatorname{\mathbf{Spin}}}


\newcommand{\Z}{\mathbb{Z}}

\newcommand{\Q}{\mathbb{Q}}



\usepackage[hypertex]{hyperref}

\title[On the exponent of spinor groups] 
{On the exponent of spinor groups}

\author
[S. Baek] {Sanghoon Baek}

\address
{Department of Mathematics and Statistics, University of Ottawa, Canada}

\email {sbaek@uottawa.ca}

\begin{document}

\maketitle

\section{Introduction}

Let $G$ be a split simple simply connected group of rank $n$ over a field $F$. Fix a maximal split torus $T$ of $G$ and a Borel subgroup $B$ containing $T$. We denote by $W$ the Weyl group of $G$ with respect to $T$. Let $\Lambda$ be the weight lattice of $G$ (hence, $T^{*}=\Lambda$).

We denote by $\omega_{1},\cdots, \omega_n$ the fundamental weights of $\Lambda$. We let $I_{K}:=\Ker(\Z[\Lambda]\to \Z)$ and $I_{CH}:=\Ker(S^{*}(\Lambda)\to \Z)$ be the augmentation ideals, where $\Z[\Lambda]\to \Z$ (respectively, $S^{*}(\Lambda)\to \Z$) is the map from the group ring $\Z[\Lambda]$ (respectively, the symmetric algebra) of $\Lambda$ to the ring of integers by sending $e^{\lambda}$ to $1$ (respectively, any element of positive degree to $0$).

For any $i\geq 0$, we consider the ring homomorphism \[\phi^{(i)}:\Z[\Lambda]\to\Z[\Lambda]/I_{K}^{i+1}\to S^{*}(\Lambda)/I_{CH}^{i+1}\to S^{i}(\Lambda),\]
where the first and the last maps are projections and the middle map sends $e^{\sum_{j=1}^{n}a_j\omega_j}$ to $\prod_{j=1}^{n}(1-\omega_j)^{-a_j}$. The \emph{$i$th-exponent of $G$} (denoted by $\tau_{i}$), as introduced in \cite{BNZ}, is the gcd of all nonnegative integers $N_{i}$ satisfying
\[N_i\cdot (I_{CH}^W)^{(i)} \subseteq \phi^{(i)}(I_{K}^W),
\]
where $I_{K}^W:=\langle \Z[\Lambda]^W\cap I_{K}\rangle$ (respectively, $I_{CH}^W:=\langle S^{*}(\Lambda)^W\cap I_{CH}\rangle$) denotes the $W$-invariant augmentation ideal of $\Z[\Lambda]$ (respectively, $S^{*}(\Lambda)$) and $(I_{CH}^W)^{(i)}=I_{CH}^W\cap S^i(\Lambda)$. Informally, these numbers $\tau_i$ measure how far is the ring $S^{*}(\Lambda)^{W}$ from being a polynomial ring in basic invariants.

For any $i\leq 4$, it was shown that the $i$th-exponent of $G$ divides the Dynkin index in \cite{BNZ} and this integer was used to estimate the torsion of the Grothendieck gamma filtration and the Chow groups of $E/B$, where $E/B$ denotes the twisted form of the variety of Borel subgroups $G/B$ for a $G$-torsor $E$.

In this paper, we show that all the remaining exponents of spinor groups divide the Dynkin index $2$. 

\smallskip

\paragraph{\bf Acknowledgments.} The work has been partially supported from the Fields Institute and from Zainoulline's NSERC Discovery grant 385795-2010.

\section{Exponent}

Let $G$ be $\gSpin_{2n+1}$ ($n\geq 3$) or $\gSpin_{2n}$ ($n\geq 4$). The
fundamental weights are defined by
\begin{align*} \label{fundamental weights}
\omega_{1}&=e_{1}, \omega_{2}=e_{1}+e_{2}, \cdots, \omega_{n-1}=e_{1}+\cdots+e_{n-1}, \omega_{n}=\frac{e_{1}+\cdots+e_{n}}{2},\\
\omega_{1}&=e_{1}, \omega_{2}=e_{1}+e_{2}, \cdots, \omega_{n-1}=\frac{e_{1}+\cdots+e_{n-1}-e_{n}}{2}, \omega_{n}=\frac{e_{1}+\cdots+e_{n}}{2},
\end{align*}
respectively, where the canonical
basis of $\mathbb{R}^{n}$ is denoted by $e_{i}$ ($1\leq i \leq n$).

For $1\leq i \leq n$, let
\begin{equation}\label{basicinv}
q_{2i}:=e_{1}^{2i}+\cdots +e_{n}^{2i}
\end{equation}   
be the basic invariants of the group $G$, i.e., be algebraically independent homogeneous generators of $S^*(\Lambda)^{W}$ as a $\Q$-algebra (see \cite[\S3.5 and \S3.12]{Hum}), together with
\begin{equation}\label{Dndegn}
q'_{n}:=e_{1}\cdots e_{n}
\end{equation}
if $G=\gSpin_{2n}$.

For any $\lambda\in \Lambda$, we denote by $W(\lambda)$ the $W$-orbit of $\lambda$. For any finite set $A$ of weights, we denote $-A$ the set of opposite weights.

The Weyl groups of $\gSpin_{2n+1}$ and $\gSpin_{2n}$  are $(\Z/2\Z)^{n}\rtimes S_{n}$
and $(\Z/2\Z)^{n-1}\rtimes S_n$, respectively. Hence, by the
action of these Weyl groups, one has the following
decomposition of $W$-orbits: if $G=\gSpin_{2n+1}$ (respectively, $G=\gSpin_{2n}$), then for any $1\leq k \leq n-1$ (respectively, $1\leq k\leq n-2$) 
\begin{equation}\label{orbitwk}
W(\omega_{k})=W_{+}(\omega_{k})\cup -W_{+}(\omega_{k}),
\end{equation}
where $W_{+}(\omega_{k})=\{e_{i_1}\pm \cdots \pm e_{i_{k}}\}_{i_{1}<\cdots <i_{k}}$. If $n$ is even, then the $W$-orbits of the last two fundamental weights of $\gSpin_{2n}$ are given by
\begin{equation}\label{orbitwn}
W(\omega_{n-1})=W_{+}(\omega_{n-1})\cup -W_{+}(\omega_{n-1}) \text{ and } W(\omega_{n})=W_{+}(\omega_{n})\cup -W_{+}(\omega_{n}),
\end{equation}
where $W_{+}(\omega_{n-1})$ (respectively, $W_{+}(\omega_{n})$) is the subset of $W(\omega_{n-1})$ (respectively, $W(\omega_{n})$) containing elements of the positive sign of $e_1$.

For any $\lambda=\sum_{j=1}^{n}a_j\omega_{j}\in \Lambda$ and any integer $m\geq 0$, we set $\lambda(m)=\sum_{j=1}^{n}a_j\omega_{j}^{m}$. For example, $\lambda(0)=\sum_{j=1}^{n}a_j$ and $\lambda(1)=\lambda$. We shall need the following lemma:

\begin{lemma}\label{elemenlemma}
$(i)$ If $G$ is $\gSpin_{2n+1}$ $($respectively, $\gSpin_{2n}$$)$, then for any odd integer $p$, any nonnegative integers $m_1,\cdots, m_p$ and, any $1\leq k\leq n-1$ $($respectively, any $1\leq k\leq n-2$$)$, we have
\[\sum_{\lambda\in W(\omega_{k})}\lambda(m_1)\cdots \lambda(m_{p})=0.
\]

\smallbreak

$(ii)$ If $G$ is $\gSpin_{2n}$ with odd $n$, then for any even integer $p$ and any nonnegative integers $m_1,\cdots, m_p$, we have
\[\sum_{\lambda\in W(\omega_{n})}\lambda(m_1)\cdots \lambda(m_{p})=\sum_{\lambda\in W(\omega_{n-1})}\lambda(m_1)\cdots \lambda(m_{p}).\]

$(iii)$ If $G$ is $\gSpin_{2n}$, then for any odd integer $p<n$ and any nonnegative integers $m_1,\cdots, m_p$, we have
\[\sum_{\lambda\in W(\omega_{n})}\lambda(m_1)\cdots \lambda(m_{p})=\sum_{\lambda\in W(\omega_{n-1})}\lambda(m_1)\cdots \lambda(m_{p})=0.\]
\end{lemma}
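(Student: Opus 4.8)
The key elementary observation I would isolate first is that for each fixed $m$ the assignment $\lambda\mapsto\lambda(m)$ is $\Z$-linear: if $\lambda=\sum_j a_j\omega_j$ then $\lambda(m)=\sum_j a_j\omega_j^m$, so in particular $(-\lambda)(m)=-\lambda(m)$. For part $(i)$ I would use only this together with the decomposition \eqref{orbitwk}, which shows that $\lambda\mapsto-\lambda$ permutes the orbit $W(\omega_k)$. Reindexing the sum by this permutation gives
\[\sum_{\lambda\in W(\omega_k)}\prod_{r=1}^p\lambda(m_r)=\sum_{\lambda\in W(\omega_k)}\prod_{r=1}^p(-\lambda)(m_r)=(-1)^p\sum_{\lambda\in W(\omega_k)}\prod_{r=1}^p\lambda(m_r),\]
and since $p$ is odd the sum equals its own negative, hence vanishes (as $S^*(\Lambda)$ is torsion-free).

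For the half-spin weights I would first record that every element of $W(\omega_n)\cup W(\omega_{n-1})$ has the form $\lambda_\epsilon:=\tfrac12\sum_{i=1}^n\epsilon_i e_i$ for a sign vector $\epsilon\in\{\pm1\}^n$, with $W(\omega_n)$ (resp.\ $W(\omega_{n-1})$) corresponding to $\prod_i\epsilon_i=1$ (resp.\ $=-1$); indeed the Weyl group acts by permutations and even sign changes, which preserve the parity of the number of negative signs. Part $(ii)$ then follows again by reindexing: when $n$ is odd one has $\prod_i(-\epsilon_i)=-\prod_i\epsilon_i$, so $\lambda\mapsto-\lambda$ is a bijection $W(\omega_n)\to W(\omega_{n-1})$, and for even $p$ the relation $(-\lambda)(m)=-\lambda(m)$ gives $\prod_r(-\lambda)(m_r)=\prod_r\lambda(m_r)$, whence the two orbit sums coincide.

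For part $(iii)$ I would exploit linearity in $\epsilon$. Writing $g_i^{(m)}:=\tfrac12\,(e_i)(m)\in S^m(\Lambda)\otimes\Q$ (the image of $e_i$ under $\lambda\mapsto\lambda(m)$, scaled by $\tfrac12$), linearity gives $\lambda_\epsilon(m)=\sum_{i=1}^n\epsilon_i\,g_i^{(m)}$. Expanding the product and interchanging summations reduces each orbit sum to
\[\sum_{\epsilon:\ \prod_i\epsilon_i=\delta}\ \prod_{r=1}^p\lambda_\epsilon(m_r)=\sum_{i_1,\dots,i_p}c_\delta(i_1,\dots,i_p)\prod_{r=1}^p g_{i_r}^{(m_r)},\qquad c_\delta(i_1,\dots,i_p):=\!\!\sum_{\epsilon:\ \prod_i\epsilon_i=\delta}\!\!\epsilon_{i_1}\cdots\epsilon_{i_p}.\]
Letting $O\subseteq\{1,\dots,n\}$ be the set of indices occurring an odd number of times among $i_1,\dots,i_p$, so that $\epsilon_{i_1}\cdots\epsilon_{i_p}=\prod_{i\in O}\epsilon_i$, orthogonality of the characters of $(\Z/2\Z)^n$ yields $c_\delta=2^{n-1}\bigl([O=\emptyset]+\delta\,[O=\{1,\dots,n\}]\bigr)$, where $[P]$ is $1$ if $P$ holds and $0$ otherwise. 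Now $|O|\equiv p\pmod 2$, so $p$ odd forces $O\neq\emptyset$; and $O=\{1,\dots,n\}$ forces $p\ge n$, so $p<n$ forces $O\neq\{1,\dots,n\}$. Under the hypotheses of $(iii)$ both indicator terms vanish, hence $c_\delta\equiv0$ and both orbit sums are zero. I expect the only genuine bookkeeping to be the verification of the orbit description and of the character-sum formula for $c_\delta$; once these are in hand, the arithmetic constraints ``$p$ odd'', ``$p<n$'' and ``$n$ odd, $p$ even'' enter immediately, so the principal obstacle is organizing the half-spin computation cleanly rather than any isolated hard step.
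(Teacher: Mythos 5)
Your proof is correct and follows the paper's argument essentially step for step: parts $(i)$ and $(ii)$ use exactly the reindexing by $\lambda\mapsto-\lambda$ via the decomposition (\ref{orbitwk}) and the identity $W(\omega_n)=-W(\omega_{n-1})$ for odd $n$ that the paper uses. Your character-orthogonality evaluation of $c_\delta$ in $(iii)$ is the same mechanism as the paper's counting claim that each monomial $\pm\lambda_{i_1}(m_1)\cdots\lambda_{i_p}(m_p)/2^p$ occurs $2^{n-2}$ times with each sign in both orbit sums, merely formalized uniformly (the paper also treats $n$ even separately via (\ref{orbitwn}), which your version makes unnecessary) and with the roles of the hypotheses ``$p$ odd'' and ``$p<n$'' made explicit.
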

\begin{proof}
$(i)$ It follows from (\ref{orbitwk}) that
\begin{align*}
\sum_{\lambda\in W(\omega_{k})}\lambda(m_1)\cdots \lambda(m_{p})&=\sum_{\lambda\in W_{+}(\omega_{k})}\lambda(m_1)\cdots \lambda(m_{p})+\sum_{\lambda\in -W_{+}(\omega_{k})}\lambda(m_1)\cdots \lambda(m_{p})\\
&=\sum_{\lambda\in W_{+}(\omega_{k})}\lambda(m_1)\cdots \lambda(m_{p})-\sum_{\lambda\in W_{+}(\omega_{k})}\lambda(m_1)\cdots \lambda(m_{p})\\
&=0.
\end{align*}

\smallskip

$(ii)$ If $G$ is $\gSpin_{2n}$ with odd $n$, then we have $W(\omega_{n})=-W(\omega_{n-1})$. Hence, the result immediately follows from the assumption that $p$ is even.

\smallskip

$(iii)$ If $n$ is even, then the result follows from (\ref{orbitwn}) by the same argument as in the proof of $(i)$. In general, note that for any $\lambda_{i_{1}},\cdots, \lambda_{i_{p}}\in W_{+}(\omega_{1})$ the term $\lambda_{i_{1}}(m_1)\cdots \lambda_{i_{p}}(m_{p})/2^{p}$ (respectively, -$\lambda_{i_{1}}(m_1)\cdots \lambda_{i_{p}}(m_{p})/2^{p}$) appears $2^{n-2}$ times (respectively, $2^{n-2}$) in  both sums in $(iii)$.
\end{proof}

Let $p$ be an even integer and $q\geq 2$ an integer. For any nonnegative integers $m_1,\cdots, m_p$, we define
\[\Lambda(p,q)(m_1,\cdots,m_p):=\sum \lambda_{j_1}(m_1)\cdots \lambda_{j_p}(m_{p}),\]
where the sum ranges over all different $\lambda_{i_1},\cdots,\lambda_{i_{q}}\in W_{+}(\omega_1)$ and all $\lambda_{i_1},\cdots,\lambda_{i_{p}}\\\in \{\lambda_{i_1},\cdots,\lambda_{i_{q}}\}$ such that the numbers of $\lambda_{i_1},\cdots,\lambda_{i_{q}}$ appearing in $\lambda_{i_1},\cdots,\lambda_{i_{p}}$ are all nonnegative even solutions of $x_1+\cdots+x_q=p$. If $p<2q$, then we set $\Lambda(p,q)(m_1,\cdots,m_p)=0$. Given $m_1,\cdots,m_p$, we simply write $\Lambda(p,q)$ for $\Lambda(p,q)(m_1,\cdots,m_p)$.

For instance, $\Lambda(4,2)$ is the sum of $\lambda_{j_{1}}(m_1)\lambda_{j_{2}}(m_2)\lambda_{j_{3}}(m_3)\lambda_{j_{4}}(m_4)$ for all $j_{1}, j_{2}, j_{3}, j_{4}\in \{i,j\}$ and all $1\leq i\neq j\leq n$ such that two $i$'s and two $j$'s appear in $j_{1}, j_{2}, j_{3}, j_{4}$.

\begin{example}\label{omega2eg}
We observe that
\begin{equation}\label{simpleob}
(x_1+x_2)(x'_1+x'_2)+(x_1-x_2)(x'_1-x'_2)=2(x_1x'_1+x_2x'_2).
\end{equation}
If $G$ is $\gSpin_{2n+1}$ or $\gSpin_{2n}$, then by (\ref{orbitwk}) and (\ref{simpleob}) we have
\[\sum_{W_{+}(\omega_2)}\lambda(m_1)\lambda(m_2)=2(n-1)\sum_{W_{+}(\omega_1)}\lambda(m_1)\lambda(m_2)\]
for any nonnegative integers $m_1$ and $m_2$ as we have $(n-1)$ choices of such pairs in the left hand side of (\ref{simpleob}) from $W_{+}(\omega_2)$, which implies that
\[\sum_{W(\omega_2)}\lambda(m_1)\lambda(m_2)=2(n-1)\sum_{W(\omega_1)}\lambda(m_1)\cdots\lambda(m_2),\]
(cf. \cite[Lemma 5.1(ii)]{BNZ}). For any even $p\geq 4$, we apply the same argument with the expansion of $(x_1+x_2)\cdots(x_{1}^{(p)}+x_{2}^{(p)})+(x_1-x_2)\cdots(x_{1}^{(p)}-x_{2}^{(p)})$. Then, we have
\[\sum_{W_{+}(\omega_2)}\lambda(m_1)\cdots\lambda(m_p)=2(n-1)\sum_{W_{+}(\omega_1)}\lambda(m_1)\cdots\lambda(m_p)+2\Lambda(p,2),\]
which implies that
\[\sum_{W(\omega_2)}\lambda(m_1)\cdots\lambda(m_p)=2(n-1)\sum_{W(\omega_1)}\lambda(m_1)\cdots\lambda(m_p)+2^{2}\Lambda(p,2).\]
\end{example}

We generalize Example \ref{omega2eg} to any $\omega_{k}$ as follows.

\begin{lemma}\label{generallem}
If $G$ is $\gSpin_{2n+1}$ $($respectively, $\gSpin_{2n}$$)$, then for any $1\leq k \leq n-1$ $($respectively, $1\leq k\leq n-2$$)$, any even $p$, and any nonnegative integers $m_1,\cdots m_p$ we have
\[\sum_{W(\omega_{k})}\lambda(m_1)\cdots \lambda(m_p)=2^{k-1}{{n-1}\choose{k-1}}\sum_{W(\omega_{1})}\lambda(m_1)\cdots \lambda(m_p)+\sum_{j=2}^{k}2^{k}{{n-j}\choose{k-j}}\Lambda(p,j).
\]
\end{lemma}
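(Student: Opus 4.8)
The plan is to reduce the orbit sum over $W(\omega_{k})$ to one over $W_{+}(\omega_{k})$, expand each weight in the $e_{i}$-basis, and then perform two successive combinatorial sums: a sum over signs and a sum over $k$-subsets. The starting observation is that $\lambda\mapsto\lambda(m)$ is additive in $\lambda$, since $\lambda(m)=\sum_{j}a_{j}\omega_{j}^{m}$ depends $\Z$-linearly on the coordinates $a_{j}$; as each $e_{i}$ lies in $\Lambda$, this gives $\lambda(m)=\sum_{l=1}^{k}\epsilon_{l}\,e_{i_{l}}(m)$ for a typical element $\lambda=e_{i_{1}}+\epsilon_{2}e_{i_{2}}+\cdots+\epsilon_{k}e_{i_{k}}$ of $W_{+}(\omega_{k})$, where $i_{1}<\cdots<i_{k}$, $\epsilon_{l}\in\{\pm1\}$ and $\epsilon_{1}=1$. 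Because $p$ is even, $\lambda$ and $-\lambda$ contribute equally, so by (\ref{orbitwk}) one has $\sum_{W(\omega_{k})}\lambda(m_{1})\cdots\lambda(m_{p})=2\sum_{W_{+}(\omega_{k})}\lambda(m_{1})\cdots\lambda(m_{p})$, and it suffices to evaluate the latter.

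Next I would fix a $k$-subset $\{i_{1}<\cdots<i_{k}\}$ and sum over the $2^{k-1}$ sign vectors $(\epsilon_{2},\dots,\epsilon_{k})$. Expanding $\prod_{r=1}^{p}\bigl(\sum_{l}\epsilon_{l}e_{i_{l}}(m_{r})\bigr)$ into monomials indexed by maps $(l_{1},\dots,l_{p})\in\{1,\dots,k\}^{p}$, the sign sum contributes the factor $\prod_{l=2}^{k}\sum_{\epsilon_{l}=\pm1}\epsilon_{l}^{\,c_{l}}$, where $c_{l}$ counts how often $l$ occurs among $l_{1},\dots,l_{p}$. This factor equals $2^{k-1}$ when $c_{2},\dots,c_{k}$ are all even and vanishes otherwise; since $p=\sum_{l}c_{l}$ is even, evenness of $c_{2},\dots,c_{k}$ forces $c_{1}$ even too. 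Thus only those monomials in which every index occurs an even number of times survive, each with the uniform weight $2^{k-1}$ --- this is exactly the mechanism already seen in (\ref{simpleob}) and Example \ref{omega2eg}.

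Finally I would sum over all $k$-subsets and regroup the surviving monomials according to the number $j$ of indices that actually occur (each necessarily a positive even number of times). For a fixed $j$-subset $J$ of occurring indices, the number of $k$-subsets containing $J$ is $\binom{n-j}{k-j}$, independent of $J$; summing the associated monomials over all such $J$ reproduces $\Lambda(p,j)$ when $j\geq2$ and $\sum_{W_{+}(\omega_{1})}\lambda(m_{1})\cdots\lambda(m_{p})$ when $j=1$. This yields
\[\sum_{W_{+}(\omega_{k})}\lambda(m_{1})\cdots\lambda(m_{p})=2^{k-1}\binom{n-1}{k-1}\sum_{W_{+}(\omega_{1})}\lambda(m_{1})\cdots\lambda(m_{p})+2^{k-1}\sum_{j=2}^{k}\binom{n-j}{k-j}\Lambda(p,j),\]
where the terms with $2j>p$ vanish by the convention $\Lambda(p,j)=0$ for $p<2j$, so extending the sum to $j=k$ is harmless. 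Doubling and using $\sum_{W(\omega_{1})}=2\sum_{W_{+}(\omega_{1})}$ gives the claimed identity.

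I expect the only delicate part to be the two rounds of bookkeeping: verifying that the sign-average isolates precisely the ``all multiplicities even'' monomials with the uniform weight $2^{k-1}$, and then checking that the reindexing by occurring support matches the definition of $\Lambda(p,j)$ and that the multiplicity $\binom{n-j}{k-j}$ is correct. The rest is the formal linearity of $(\,\cdot\,)(m)$ and the sign symmetry already exploited in Lemma \ref{elemenlemma} and Example \ref{omega2eg}.
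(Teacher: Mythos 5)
Your argument is correct and is essentially the paper's own proof in more explicit form: the paper likewise expands each product over the orbit, discards monomials in which some $e_{i}$ occurs an odd number of times (by the sign symmetry of the Weyl orbit, your sign-average computation), and counts that the pure terms occur with multiplicity $2^{k-1}\binom{n-1}{k-1}$ relative to $\sum_{W(\omega_{1})}$ and that each monomial of $\Lambda(p,j)$ occurs $2^{k}\binom{n-j}{k-j}$ times. Your reduction to $W_{+}(\omega_{k})$ followed by doubling, and the explicit factorized sign sum $\prod_{l=2}^{k}\sum_{\epsilon_{l}=\pm 1}\epsilon_{l}^{c_{l}}$, are just careful bookkeeping for the same counting argument, so there is nothing to correct.
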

\begin{proof}
For any $\lambda \in W(\omega_1)$, there are $2^{k}{{n-1}\choose{k-1}}$ choices of the element containing $\lambda$ in $W(\omega_k)$, thus we have the term $2^{k-1}{{n-1}\choose{k-1}}\sum_{W(\omega_{1})}\lambda(m_1)\cdots \lambda(m_p)$ in $\sum_{W(\omega_{k})}\lambda(m_1)\cdots \lambda(m_p)$.

If an element $\lambda\in W(\omega_1)$ appears odd times in a term $\lambda_{i_1}(m_1)\cdots \lambda_{i_p}(m_p)$ of $\sum_{W(\omega_{k})}\lambda(m_1)\cdots \lambda(m_p)$, where $\lambda_{i_1},\cdots,\lambda_{i_p}\in W(\omega_1)$, then by the action of Weyl group this term vanishes in $\sum_{W(\omega_{k})}\lambda(m_1)\cdots \lambda(m_p)$. Hence, the remaining terms in $\sum_{W(\omega_{k})}\lambda(m_1)\cdots \lambda(m_p)$ are a linear combination of $\Lambda(p,j)$ for all $2\leq j\leq k$ such that $p\geq 2k$. As each term $\Lambda(p,j)$ appears $2^{k}{{n-j}\choose{k-j}}$ times in $\sum_{W(\omega_{k})}\lambda(m_1)\cdots \lambda(m_p)$, the result follows.
\end{proof}

\smallskip

For any $\lambda\in \Lambda$, we denote by $\rho(\lambda)$ the sum of all elements $e^{\mu}\in \Z[\Lambda]$ over all elements $\mu$ of $W(\lambda)$. Let $i!\cdot \phi^{(i)}(e^\lambda)=\lambda^{i}+S_{i}$ for any $i\geq 1$, where $S_i$ is the sum of remaining terms in $i!\cdot \phi^{(i)}(e^\lambda)$ and $\lambda=\sum a_{j}\omega_j$, $a_{j}\in \Z$. Hence, for any fundamental weight $\omega_k$ we have
\begin{equation}\label{bsifactorial}
i!\cdot \phi^{(i)}(\rho(\omega_{k}))=\sum_{W(\omega_{k})}\lambda^{i}+\sum_{W(\omega_{k})}S_{i}.
\end{equation}
We view $i!\cdot \phi^{(i)}(e^\lambda)$ as a polynomial in variables $\lambda, \lambda(m_1),\cdots ,\lambda(m_j)$ for some nonnegative integers $m_1,\cdots, m_j$. Let $T_{i}$ be the sum of monomials in $S_i$ whose degrees are even. 

If $G$ is $\gSpin_{2n+1}$ $($respectively, $\gSpin_{2n}$$)$, then by Lemma \ref{elemenlemma}(i) the equation (\ref{bsifactorial}) reduces to
\begin{equation}\label{bsifactorial2}
i!\cdot \phi^{(i)}(\rho(\omega_{k}))=\sum_{W(\omega_{k})}\lambda^{i}+\sum_{W(\omega_{k})}T_{i}.
\end{equation}
for any $1\leq k\leq n-1$ $($respectively $1\leq k\leq n-2$$)$.

Given $p$ and $q$, we define
\[\Omega(p,q):=\sum\Lambda(p,q)(m_1,\cdots,m_p),\]
where the sum ranges over all $m_1,\cdots,m_p$ which appear in all monomials of $T_i$.
 
\begin{example}\label{deg6phi}

$(i)$ If $G$ is $\gSpin_{2n+1}$ or $\gSpin_{2n}$ and $i=4$, then by (\ref{bsifactorial2}) and Lemma \ref{generallem} we have
\begin{align*}
4!\phi^{(4)}(\rho(\omega_{1}))&=\sum_{W(\omega_{1})}\lambda^{4}+\sum_{W(\omega_{1})}T_{4},\\
4!\phi^{(4)}(\rho(\omega_{2}))&=\sum_{W(\omega_{2})}\lambda^{4}+\sum_{W(\omega_{2})}T_{4}\\
&=\sum_{W(\omega_{2})}\lambda^{4}+2(n-1)\sum_{W(\omega_{1})}T_{4},
\end{align*}
which implies that
\[4!(\phi^{(4)}(\rho(\omega_{2}))-2(n-1)\phi^{(4)}(\rho(\omega_{1})))=\sum_{W(\omega_{2})}\lambda^{4}-2(n-1)\sum_{W(\omega_{1})}\lambda^{4}.\]
By Lemma \ref{generallem}, the right-hand side of the above equation is equal to
\[4\Lambda(4,2)=4\cdot \frac{4!}{2!2!}\sum_{i<j}e_{i}^{2}e_{j}^{2}.\]
Hence, we have
\[\phi^{(4)}(\rho(\omega_{2}))-2(n-1)\phi^{(4)}(\rho(\omega_{1}))=\sum_{i<j}e_{i}^{2}e_{j}^{2}.\]

\smallskip

$(ii)$ If $G$ is $\gSpin_{2n+1}$ ($n\geq 4$) or $\gSpin_{2n}$ ($n\geq 5$) and $i=6$, then by (\ref{bsifactorial2}) and Lemma \ref{generallem} we have
\begin{align*}
6!\phi^{(6)}(\rho(\omega_{1}))&=\sum_{W(\omega_{1})}\lambda^{6}+\sum_{W(\omega_{1})}T_{6},\\
6!\phi^{(6)}(\rho(\omega_{2}))&=\sum_{W(\omega_{2})}\lambda^{6}+2(n-1)\sum_{W(\omega_{1})}T_{6}+4\Omega(4,2),\\
6!\phi^{(6)}(\rho(\omega_{3}))&=\sum_{W(\omega_{3})}\lambda^{6}+4{{n-1}\choose{2}}\sum_{W(\omega_{1})}T_{6}+8(n-2)\Omega(4,2),
\end{align*}
which implies that
\[\phi^{(6)}(\rho(\omega_{3}))-2(n-2)\phi^{(6)}(\rho(\omega_{2}))+2(n-1)(n-2)\phi^{(6)}(\rho(\omega_{1}))=\sum_{i<j<k}e_{i}^{2}e_{j}^{2}e_{k}^{2}.\]

\end{example}

\begin{lemma}\label{Dnqprime}
$(i)$ If $G$ is $\gSpin_{2n}$, then we have 
\[
\sum_{W(\omega_n)}\lambda^{n} - \sum_{W(\omega_{n-1})}\lambda^{n}=n!e_1\cdots e_n.
\]

\smallskip

$(ii)$ If $G$ is $\gSpin_{2n}$, then for any $1\leq p\leq n-1$ and any nonnegative integers $m_1,\cdots,m_p$ we have 
\[
\sum_{W(\omega_n)}\lambda(m_1)\cdots \lambda(m_p)=\sum_{W(\omega_{n-1})}\lambda(m_1)\cdots \lambda(m_p).
\]
\end{lemma}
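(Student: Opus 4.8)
The plan is to evaluate both orbit sums directly from the signed-permutation description of $W$ and to collapse their difference into an elementary character sum over $\{\pm1\}^n$. First I would record the two orbits explicitly. The Weyl group of $\gSpin_{2n}$ is the group of signed permutations of $e_1,\dots,e_n$ effecting an even number of sign changes; since $\omega_n=\tfrac{1}{2}(e_1+\cdots+e_n)$ carries no sign and $\omega_{n-1}=\tfrac{1}{2}(e_1+\cdots+e_{n-1}-e_n)$ exactly one, their orbits are
\[W(\omega_n)=\left\{\tfrac{1}{2}\sum_i\varepsilon_ie_i : \varepsilon\in\{\pm1\}^n,\ \#\{i:\varepsilon_i=-1\}\ \text{even}\right\},\]
\[W(\omega_{n-1})=\left\{\tfrac{1}{2}\sum_i\varepsilon_ie_i : \varepsilon\in\{\pm1\}^n,\ \#\{i:\varepsilon_i=-1\}\ \text{odd}\right\}.\]
Recall that $\lambda\mapsto\lambda(m)$ is the $\Z$-linear map extending $\omega_j\mapsto\omega_j^m$, hence additive; writing $e_i(m)$ for the image of the weight $e_i$, I obtain $\lambda(m)=\tfrac{1}{2}\sum_i\varepsilon_ie_i(m)$ for $\lambda=\tfrac{1}{2}\sum_i\varepsilon_ie_i$.

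I would then subtract the odd-parity sum from the even-parity one. The two parity classes together exhaust $\{\pm1\}^n$, and the sign $(-1)^{\#\{i:\varepsilon_i=-1\}}=\prod_i\varepsilon_i$ distinguishes them, so
\[\sum_{W(\omega_n)}\prod_{l=1}^p\lambda(m_l)-\sum_{W(\omega_{n-1})}\prod_{l=1}^p\lambda(m_l)=\frac{1}{2^p}\sum_{\varepsilon\in\{\pm1\}^n}\Bigl(\prod_i\varepsilon_i\Bigr)\prod_{l=1}^p\Bigl(\sum_i\varepsilon_ie_i(m_l)\Bigr).\]
Expanding the product over $l$ and interchanging summation, the coefficient of a monomial $e_{i_1}(m_1)\cdots e_{i_p}(m_p)$ is the character sum $\sum_{\varepsilon}\prod_i\varepsilon_i^{\,c_i+1}=\prod_i\bigl(1+(-1)^{c_i+1}\bigr)$, where $c_i=\#\{l:i_l=i\}$. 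This equals $2^n$ when every $c_i$ is odd and $0$ otherwise.

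For $(ii)$ a nonzero coefficient forces all $c_i$ odd, hence all $c_i\geq1$ and $\sum_ic_i=p\geq n$; as $p\leq n-1$ this cannot occur, so every coefficient vanishes and the two sums coincide. For $(i)$ I specialize to $p=n$ and $m_1=\cdots=m_n=1$, so that $e_{i_l}(1)=e_{i_l}$; now all $c_i$ odd together with $\sum_ic_i=n$ forces $c_i=1$ for every $i$, i.e. $(i_1,\dots,i_n)$ ranges over the $n!$ permutations of $(1,\dots,n)$, each contributing the monomial $e_1\cdots e_n$ with coefficient $2^n$. Hence the difference equals $\frac{1}{2^n}\cdot n!\cdot2^n\,e_1\cdots e_n=n!\,e_1\cdots e_n$.

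The computation is routine once the reduction to the character sum $\prod_i(1+(-1)^{c_i+1})$ is in place, and it is uniform in the parity of $p$, so it simultaneously recovers the odd-$p$ cases already handled by Lemma \ref{elemenlemma}(iii). The point demanding the most care is the bookkeeping: correctly matching $W(\omega_n)$ and $W(\omega_{n-1})$ with the even and odd sign patterns, invoking additivity of $\lambda\mapsto\lambda(m)$ to pull the factor $\tfrac{1}{2}$ out of each of the $p$ factors, and tracking the compensating powers $2^{-p}$ and $2^{n}$ so that the normalizations in $(i)$ and $(ii)$ emerge correctly. I expect no conceptual obstacle beyond this accounting.
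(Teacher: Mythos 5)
Your proof is correct, but it takes a genuinely different route from the paper's. The paper proves part $(i)$ by a case split on the parity of $n$: for even $n$ it works with the half-orbits $W_{+}(\omega_n)$ and $W_{+}(\omega_{n-1})$ from (\ref{orbitwn}), extracts the coefficient of $e_1\cdots e_n$ by a multinomial count over $2^{n-2}$ orbit elements, and argues separately that every monomial with an odd exponent cancels while the even-exponent contributions (the $\Lambda(n,j)$-type terms) occur identically in both sums; for part $(ii)$ it first invokes Lemma \ref{elemenlemma}(ii)(iii) to reduce to the case where $n$ and $p$ are both even and then repeats the same matching-of-terms bookkeeping. You instead parametrize both orbits at once by sign vectors $\varepsilon\in\{\pm1\}^n$, graded by the parity character $\prod_i\varepsilon_i$, so that the \emph{difference} of the two orbit sums becomes a single weighted sum over all of $\{\pm1\}^n$, and orthogonality collapses each monomial's coefficient to $\prod_i\bigl(1+(-1)^{c_i+1}\bigr)$, which is $2^n$ when all multiplicities $c_i$ are odd and $0$ otherwise. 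This is uniform in the parities of $n$ and $p$, needs no appeal to Lemma \ref{elemenlemma} nor to the $\Lambda(p,j)$ apparatus, and makes both conclusions transparent: in $(ii)$ all $c_i$ odd forces $p=\sum_i c_i\geq n$, impossible for $p\leq n-1$; in $(i)$ all $c_i$ odd with $\sum_i c_i=n$ forces $c_i=1$ for every $i$, yielding exactly the $n!$ permutation terms and the stated normalization $\frac{1}{2^n}\cdot n!\cdot 2^n=n!$. The points requiring care, which you handle correctly, are that $\lambda\mapsto\lambda(m)$ is $\Z$-linear in the $\omega_j$-coordinates and hence extends $\Q$-linearly so that $\lambda(m)=\tfrac{1}{2}\sum_i\varepsilon_ie_i(m)$ is legitimate (each $e_i$ lies in $\Lambda$), and that the orbits of $\omega_n$ and $\omega_{n-1}$ are exactly the even- and odd-parity sign patterns, consistent with $|W(\omega_n)|=|W(\omega_{n-1})|=2^{n-1}$. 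One small caveat: your closing remark that the computation recovers Lemma \ref{elemenlemma}(iii) overstates slightly, since your character sum shows the two sums there are \emph{equal}, not that each vanishes individually; this is immaterial here, as the lemma under review asserts only the equality.
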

\begin{proof}
$(i)$ First, assume that $n\geq 4$ is even. We show that \[
\sum_{W_{+}(\omega_n)}\lambda^{n} - \sum_{W_{+}(\omega_{n-1})}\lambda^{n}=(n!/2)e_1\cdots e_n.
\]
As $|W_{+}(\omega_n)|=|W_{+}(\omega_{n-1})|=2^{n-2}$, we have
\[
(n!/2^{n})2^{n-2}e_1\cdots e_n - (-(n!/2^{n})2^{n-2}e_1\cdots e_n)=(n!/2)e_1\cdots e_n
\]
in $\sum_{W(\omega_n)}\lambda^{n} - \sum_{W(\omega_{n-1})}\lambda^{n}$. If one of the exponents $i_{1},\cdots,i_{n}$ in $e_1^{i_1}\cdots e_n^{i_n}$ (except the case $i_1=\cdots=i_n=1$) is odd, then from the orbits $W_{+}(\omega_n)$ and $W_{+}(\omega_{n-1})$ this monomial vanishes in each sum of $\sum_{W_{+}(\omega_n)}\lambda^{n} - \sum_{W_{+}(\omega_{n-1})}\lambda^{n}$. Otherwise, the terms $2^{n-2}\sum_{j=1}^{n}e_{j}^{n} ,\Lambda(n,2)\cdots, \Lambda(n,n/2)$ with $m_1=\cdots=m_n=1$ are in both $\sum_{W_{+}(\omega_n)}\lambda^{n}$ and $\sum_{W_{+}(\omega_{n-1})}\lambda^{n}$.

Now, we assume that $n\geq 4$ is odd. As $|W(\omega_n)|=|W(\omega_{n-1})|=2^{n-1}$, we have
\[
(n!/2^{n})2^{n-1}e_1\cdots e_n - (-(n!/2^{n})2^{n-1}e_1\cdots e_n)=n!e_1\cdots e_n
\]
in $\sum_{W(\omega_n)}\lambda^{n} - \sum_{W(\omega_{n-1})}\lambda^{n}$. By the same argument, if one of the exponents $i_{1},\cdots,i_{n}$ in $e_1^{i_1}\cdots e_n^{i_n}$ (except the case $i_1=\cdots=i_n=1$) is odd, then this monomial vanishes in each sum of $\sum_{W(\omega_n)}\lambda^{n} - \sum_{W(\omega_{n-1})}\lambda^{n}$. This completes the proof of $(i)$.

\smallskip

$(ii)$ By Lemma \ref{elemenlemma}(ii)(iii), it is enough to consider the case where both $n$ and $p$ are even. For any $p$ and any $n\geq p+2$, we have
$2^{n-2}(\sum_{W_{+}(\omega_1)}\lambda(m_1)\cdots \lambda(m_{p}))$ in both $\sum_{W_{+}(\omega_n)}\lambda(m_1)\cdots \lambda(m_{p})$ and $\sum_{W_{+}(\omega_{n-1})}\lambda(m_1)\cdots \lambda(m_{p})$. By the action of Weyl group, any term $\lambda_{i_1}(m_1)\cdots \lambda_{i_p}(m_p)$, where an element $\lambda\in W(\omega_1)$ appears odd times in either $\sum_{W_{+}(\omega_n)}\lambda(m_1)\cdots \lambda(m_{p})-2^{n-2}(\sum_{W_{+}(\omega_1)}\lambda(m_1)\\\cdots \lambda(m_{p}))$ or $\sum_{W_{+}(\omega_{n-1})}\lambda(m_1)\cdots \lambda(m_{p})-2^{n-2}(\sum_{W_{+}(\omega_1)}\lambda(m_1)\cdots \lambda(m_{p}))$, vanishes. As each term of $\Lambda(p,2),\cdots, \Lambda(p,p/2)$ appears in both $\sum_{W_{+}(\omega_n)}\lambda(m_1)\cdots\\ \lambda(m_{p})$ and $\sum_{W_{+}(\omega_{n-1})}\lambda(m_1)\cdots \lambda(m_{p})$, this completes the proof.

\end{proof}

\begin{theorem}\label{mainthm}
If $G$ is $\gSpin_{2n+1}$ $($respectively, $\gSpin_{2n}$$)$, then for any $i\geq 3$ and any $n\geq [i/2]+1$ $($respectively, $n\geq [i/2]+2$$)$ the exponent $\tau_i$ divides the Dynkin index $\tau_2=2$.
\end{theorem}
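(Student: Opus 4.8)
The plan is to prove the sharper statement $2\cdot(I_{CH}^W)^{(i)}\subseteq\phi^{(i)}(I_K^W)$, which is exactly $\tau_i\mid 2$ since $(I_{CH}^W)^{(i)}=S^i(\Lambda)^W$ for $i\geq1$. The starting point is that $\phi^{(i)}(I_K^W)$ is large for a formal reason: for every weight $\mu$ one has $\rho(\mu)-|W(\mu)|\in I_K^W$, and as $\phi^{(i)}$ kills constants, $\phi^{(i)}(\rho(\mu))\in\phi^{(i)}(I_K^W)$. Moreover the underlying ring homomorphism $\Z[\Lambda]\to S^*(\Lambda)/I_{CH}^{i+1}$ carries $I_K^a$ into $I_{CH}^a$ and induces the identity on $\mathrm{gr}^a_{I_K}\Z[\Lambda]\cong S^a(\Lambda)$; hence for $x\in I_K^W\cap I_K^a$ the component $\phi^{(a)}(x)$ is the leading class of $x$, and for $x\in I_K^a,\,y\in I_K^b$ one has $\phi^{(a+b)}(xy)=\phi^{(a)}(x)\phi^{(b)}(y)$. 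Writing $J=\bigoplus_a\phi^{(a)}(I_K^W\cap I_K^a)$, this makes $J$ a graded subring of $S^*(\Lambda)^W$ with $J^i\subseteq\phi^{(i)}(I_K^W)$.

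First I would place all \emph{integral} invariants into $J$. The combinations in Example \ref{deg6phi} are arranged, via Lemma \ref{generallem}, so that after subtracting the contributions of $\omega_1,\dots,\omega_{k-1}$ every component $\phi^{(j)}$ with $j<2k$ cancels; the resulting invariant therefore lies in $I_K^W\cap I_K^{2k}$ and has leading class $c_k:=\sum_{i_1<\cdots<i_k}e_{i_1}^2\cdots e_{i_k}^2$. In the same spirit Lemma \ref{Dnqprime} gives $\rho(\omega_n)-\rho(\omega_{n-1})\in I_K^W\cap I_K^{n}$ with leading class $q'_n=e_1\cdots e_n$. Thus $c_1,\dots,c_{[i/2]}$ (and $q'_n$ for $\gSpin_{2n}$) lie in $J$; the rank hypotheses $n\geq[i/2]+1$ (resp. $[i/2]+2$) are precisely what put $\omega_1,\dots,\omega_{[i/2]}$ in the range where Lemmas \ref{elemenlemma} and \ref{generallem} apply, so these cancellations are available. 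Since $J$ is a ring, its degree-$i$ part contains every monomial of degree $i$ in the $c_k$ (and $q'_n$), i.e. the whole degree-$i$ part of the root-lattice invariant ring $\Z[c_1,\dots,c_n]$ (resp. $\Z[c_1,\dots,c_{n-1},q'_n]$); so every integral invariant of degree $i$ already lies in $\phi^{(i)}(I_K^W)$ with no factor of $2$.

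The genuinely harder point is that $S^i(\Lambda)^W$ strictly contains this integral ring and carries deep $2$-power denominators, coming from the spin node where $\Lambda$ refines the root lattice: already $\tfrac12 q_2\in S^2(\Lambda)^W$, and hence $(\tfrac12 q_2)^k=\tfrac1{2^k}c_1^k\in S^{2k}(\Lambda)^W$. The claim $\tau_i\mid2$ is equivalent to the quotient $S^i(\Lambda)^W/\phi^{(i)}(I_K^W)$ being killed by $2$, i.e. to $\phi^{(i)}(I_K^W)$ sitting exactly one factor of $2$ below $S^i(\Lambda)^W$. To produce the required half-integral classes I would compute $\phi^{(i)}(\rho(\omega_n))$ and $\phi^{(i)}(\rho(\omega_{n-1}))$ directly: Lemma \ref{elemenlemma}(i) discards the odd part, Lemma \ref{Dnqprime} compares the two spin orbits and isolates the $q'_n$-contribution, and Lemma \ref{generallem} together with the $\Lambda(p,q)$ and $\Omega(p,q)$ bookkeeping evaluates $\sum_{W(\omega_n)}\lambda^i$ as an explicit $\Z[1/2]$-combination of the $c_m$; the interplay of the factor $|W(\omega_n)|$ with the halved weights is what manufactures the needed denominators.

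The main obstacle is the $2$-adic bookkeeping in this last step: one must show that the spin classes so produced, together with $J^i$, generate $S^i(\Lambda)^W$ up to a subgroup of exponent $2$ — that is, that the denominators supplied by the spin orbits fall exactly one power of $2$ short of those actually occurring in $S^i(\Lambda)^W$ (note $2\cdot\tfrac1{2^k}c_1^k=\tfrac1{2^{k-1}}c_1^k$ is still non-integral, so one cannot merely clear denominators). I expect to organize this as an induction on $i$ anchored at the base case $\tau_2=2$: a generator of maximal denominator factors as $\tfrac12 q_2$ times a generator of lower degree, and the ring and filtration structure of $\phi$ propagates the bound $\tau_2=2$ upward, while the explicit degree-$i$ spin computation furnishes the top class. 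Here $i\geq3$ ensures that every generator has a proper factor through which the inductive bound applies, so the single excess factor of $2$ coming from the spin node — the Dynkin index — is never compounded; and $n\geq[i/2]+1$ (resp. $+2$) ensures enough orbits are present for all the cancellations invoked above.
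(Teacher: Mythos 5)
Your first half coincides with the paper's own argument: the telescoping combinations of $\phi^{(2k)}(\rho(\omega_j))$ obtained from Lemma \ref{generallem} (after discarding odd parts via Lemma \ref{elemenlemma}(i)) are exactly the paper's identity (\ref{complicatedfor}), which places the elementary symmetric polynomials $p_k$ in the $e_j^2$ into $\phi^{(2k)}(I_K^W)$, and Lemma \ref{Dnqprime} yields $q'_n=\phi^{(n)}(\rho(\omega_n))-\phi^{(n)}(\rho(\omega_{n-1}))$; your multiplicative subring $J$ is a reasonable substitute for the paper's Newton-identity induction (\ref{newton}). (One correction even here: elements of the ideal $I_K^W$ need not be $W$-invariant, so $J$ is a graded subring of $S^{*}(\Lambda)$, not of $S^{*}(\Lambda)^W$.) Up to that point the two arguments agree in substance.

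The second half has a genuine gap, rooted in a misreading of the definition of $\tau_i$. The identification $(I_{CH}^W)^{(i)}=S^i(\Lambda)^W$ is false: $(I_{CH}^W)^{(i)}$ is the degree-$i$ piece of the \emph{ideal} generated by positive-degree invariants, so for $\gSpin_{2n+1}$ one has $S^3(\Lambda)^W=0$ (since $-1\in W$) while $(I_{CH}^W)^{(3)}\supseteq \tfrac{1}{2}q_2\cdot\Lambda\neq 0$. This is not a technicality; the ideal structure is precisely what makes your ``genuinely harder point'' dissolve. One never needs half-integral \emph{invariant} classes in the image of $\phi$ (for $\tfrac{1}{2}q_2$ itself that is impossible, as $\tau_2=2$): one needs $2fg\in\phi^{(i)}(I_K^W)$ for $f$ invariant and $g\in S^{i-d}(\Lambda)$ arbitrary, and the excess denominators are absorbed into the non-invariant cofactor. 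For instance, $2(\tfrac{1}{2}q_2)^k=q_2\cdot(\tfrac{1}{2}q_2)^{k-1}$; since $\phi^{(a)}$ maps $I_K^a$ onto $S^a(\Lambda)$ (as you yourself note), one picks $z\in I_K^{2k-2}$ with $\phi^{(2k-2)}(z)=(\tfrac{1}{2}q_2)^{k-1}$ and multiplies a preimage of $q_2$ in $I_K^W\cap I_K^2$ by $z$: the product stays in $I_K^W$ and hits $2(\tfrac{1}{2}q_2)^k$. So no spin-orbit computation ``manufacturing denominators'' is needed; indeed Lemma \ref{Dnqprime} compares the spin orbits only for $p\leq n-1$ factors and is designed to output the integral $q'_n$, not half-integral classes. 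Your proposed induction is also unsupported at its key step: invariant lattice generators need not factor --- $\tfrac{1}{2}q_4=2(\tfrac{1}{2}q_2)^2-\sum_{i<j}e_i^2e_j^2$ lies in $S^4(\Lambda)^W$ but is not $\tfrac{1}{2}q_2$ times an invariant, so ``a generator of maximal denominator factors as $\tfrac{1}{2}q_2$ times a generator of lower degree'' fails as a mechanism. What actually remains after the shared first half is the reduction of $2\cdot(I_{CH}^W)^{(i)}$ to the ideal generated over $S^{*}(\Lambda)$ by the integral basic invariants $q_2,\dots,q_{2[i/2]}$ and $q'_n$ --- the step the paper inherits from the framework of \cite{BNZ} --- together with the odd-degree reduction $\tau_{2i+1}\mid\tau_{2i}$ (no odd-degree invariants except those involving $q'_n$) and the base case $\tau_3$ via $B_2=C_2$, $D_3=A_3$, none of which your proposal addresses. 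As written, it establishes only that the degree-$i$ component of the integral ring generated by the $p_k$ and $q'_n$ lies in $\phi^{(i)}(I_K^W)$, and leaves the divisibility $\tau_i\mid 2$ unproven.
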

\begin{proof}
As $B_2=C_2$ and $D_3=A_3$, we have $1=\tau_3\mid 2$ by \cite[Theorem 5.4]{BNZ}. If $G$ is $\gSpin_{2n}$ for any $n\geq 4$, then by Lemma \ref{Dnqprime}(i)(ii) we have
\[q'_n=\phi^{(n)}(\rho(\omega_{n}))-\phi^{(n)}(\rho(\omega_{n-1})),
\]
which implies that the invariant $q'_n$ is in the ideal generated by the image of $\phi^{(n)}$. As there are no invariants of odd degree except $q'_n$, we have
\[\tau_{2i+1}\mid \tau_{2i}\]
for all $i\geq 1$. Therefore, it suffices to show that $\tau_{2i}\mid \tau_{2}$ for any $i\geq 2$. 

By Lemma \ref{generallem} together with the same argument as in Example \ref{deg6phi} we have
\begin{equation}\label{complicatedfor}
\phi^{(2i)}(\rho(\omega_{i}))+\sum_{j=1}^{i-1}a_{j}\phi^{(2i)}(\rho(\omega_{i-j}))=\sum_{j_{1}<\cdots<j_{i}}e_{j_1}^{2}\cdots e_{j_i}^{2},
\end{equation}
where the integers $a_1,\cdots, a_{i-1}$ satisfy
\[\Big(\sum_{j=k}^{i-2}2^{j+1}{{n-1-k}\choose{j-k}}a_{j+1}\Big)+2^{i}{{n-1-k}\choose{i-1-k}}=0,\]
for $0\leq k\leq i-2$. Let $p_i$ be the right-hand side of (\ref{complicatedfor}). Then this equation implies that $p_i$ is in the image of $\phi^{(2i)}$. 

We show that the invariant $q_{2i}$ is in the ideal $\phi^{(2i)}(I_K^W)$ for any $i\geq 2$. We proceed by induction on $i$.
As $q_{2}=\phi^{(2)}(\rho(\omega_1))$, the case $i=2$ is obvious. By Newton's identities we have
\begin{equation}\label{newton}
(-1)^{i-1}q_{2i}=ip_{i}-\sum_{j=1}^{i-1}(-1)^{j-1}p_{i-1-j}q_{2j}
\end{equation}
with $p_{0}=1$. By the induction hypothesis, the sum of (\ref{newton}) is in $\phi^{(2i)}(I_K^W)$. Hence, $q_{2i}$ is in $\phi^{(2i)}(I_K^W)$. 
\end{proof}



For any nonnegative integer $n$, we denote by $v_{2}(n)$ the $2$-adic valuation of $n$. For a smooth projective variety $X$ over $F$, we denote by $\Gamma^{*}K(X)$ the gamma filtration on the Grothendieck ring $K(X)$. We let $c_{CH}:S^{*}(\Lambda)\to CH(G/B)$ be the characteristic map.

\begin{corollary}\label{gammatorsion}
Let $G$ be $\gSpin_{2n}$ $($respectively, $\gSpin_{2n+1}$$)$. If $2^{m(i)}(\ker c_{CH})^{(i)}\subseteq(I_{CH}^W)^{(i)}$ for some nonnegative integer $m(i)$, then for any $i\geq 3$ and any $n\geq [i/2]+2$ $($respectively, $n\geq [i/2]+1$$)$ the torsion of $\Gamma^{i}K(G/B)/\Gamma^{i+1}K(G/B)$ is annihilated by $2^{g(i)}$, where $g(i)=1+m(i)+v_{2}((i-1)!)$.
\end{corollary}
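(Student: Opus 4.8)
The plan is to deduce the bound by combining the main theorem with the general torsion estimate for the gamma filtration of \cite{BNZ}, and then to observe that for spinor groups every torsion group in sight is $2$-primary, so that only the $2$-adic valuations of the relevant constants survive.

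First I would lay out the three comparisons that together control $\Gamma^iK(G/B)/\Gamma^{i+1}K(G/B)$. The graded piece is generated by degree-$i$ products of first gamma-Chern classes of line bundles, and the Chern character identifies it rationally with $CH^i(G/B)=S^i(\Lambda)/(\ker c_{CH})^{(i)}$; the only obstruction to an integral identification is the normalization difference between the integral classes produced by the gamma operations and the Chern-character components recorded by $\phi^{(i)}$, visible in the formula $i!\,\phi^{(i)}(e^\lambda)=\lambda^i+S_i$. Passing between the two via Newton's identities shows that the discrepancy, hence the torsion contributed at this stage, is annihilated by $(i-1)!$.

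Next I would bring in the two ideal-theoretic inputs. The hypothesis $2^{m(i)}(\ker c_{CH})^{(i)}\subseteq (I_{CH}^W)^{(i)}$ replaces the defining ideal of $CH^i(G/B)$ by the invariant augmentation ideal $(I_{CH}^W)^{(i)}$ at the cost of a factor $2^{m(i)}$, and this controls the torsion of $CH^i(G/B)$ itself. The definition of the exponent then supplies $\tau_i\,(I_{CH}^W)^{(i)}\subseteq \phi^{(i)}(I_K^W)$, and the right-hand side consists exactly of the relations that are integrally realized in $K(G/B)=\Z[\Lambda]\otimes_{\Z[\Lambda]^W}\Z$, i.e.\ those already visible in the gamma filtration. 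Composing the three comparisons shows that the torsion of $\Gamma^iK(G/B)/\Gamma^{i+1}K(G/B)$ is annihilated by $\tau_i\cdot 2^{m(i)}\cdot (i-1)!$.

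Finally I would substitute the main theorem and use $2$-primality. For $i\geq 3$ and $n$ in the stated range the main theorem gives $\tau_i\mid 2$, so $v_2(\tau_i)\leq 1$. Since $G$ has type $B_n$ or $D_n$, whose unique torsion prime is $2$, the group $\Gamma^iK(G/B)/\Gamma^{i+1}K(G/B)$ has $2$-primary torsion; it is therefore already annihilated by the $2$-primary part of $\tau_i\cdot 2^{m(i)}\cdot (i-1)!$, namely $2^{\,v_2(\tau_i)+m(i)+v_2((i-1)!)}$, which divides $2^{g(i)}$ with $g(i)=1+m(i)+v_2((i-1)!)$. I expect the main obstacle to be the bookkeeping of the first two paragraphs: isolating the exact $(i-1)!$ in the gamma-versus-Chow comparison and verifying that the three comparisons compose so that their annihilating constants simply multiply; once this is settled, the substitution $\tau_i=2$ and the appeal to $2$-primality are routine.
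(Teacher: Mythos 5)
Your proposal is correct and is essentially the paper's approach: the paper's proof consists precisely of the remark that the argument of \cite[Theorem 6.5]{BNZ} goes through unchanged once Theorem \ref{mainthm} supplies $\tau_i\mid 2$ in the stated range, and your three comparisons (the $(i-1)!$ gamma-versus-Chow discrepancy, the factor $2^{m(i)}$ from the hypothesis on $\ker c_{CH}$, and the factor $\tau_i$ from the definition of the exponent), followed by passing to the $2$-primary part, reconstruct exactly that cited argument. One small inaccuracy that does not change the structure: the $(i-1)!$ in the gamma-versus-Chow comparison comes from Riemann--Roch without denominators (i.e.\ Chern character denominators), not from Newton's identities, which this paper uses only inside the proof of Theorem \ref{mainthm} to express $q_{2i}$ in terms of the elements $p_j$.
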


\begin{remark}
It is shown that $m(3)=0$ and $m(4)=1$ in \cite[Lemma 6.4]{BNZ}.
\end{remark}

\begin{proof}
The proof of \cite[Theorem 6.5]{BNZ} still works with Theorem \ref{mainthm}.
\end{proof}

\begin{corollary}
Let $G$ be $\gSpin_{2n}$ $($respectively, $\gSpin_{2n+1}$$)$. If $2^{m(i)}(\ker c_{CH})^{(i)}\subseteq(I_{CH}^W)^{(i)}$ for some nonnegative integer $m(i)$, then for any $G$-torsor $E$, any $i\geq 3$ and any $n\geq [i/2]+2$ $($respectively, $n\geq [i/2]+1$$)$ the torsion of $\CH^{i}(E/B)$ is annihilated by $2^{t(i)}$, where $t(i)=1+\sum_{j=3}^{i}g(j)+v_{2}((i-1)!)$.
\end{corollary}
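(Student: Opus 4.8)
The plan is to deduce the Chow-group bound from the gamma-filtration bound of Corollary \ref{gammatorsion} by means of the cycle map, following the argument used in \cite{BNZ} for the Chow groups of $E/B$. Since $\CH^{i}(G/B)$ is torsion-free, the torsion of $\CH^{i}(E/B)$ lies in the kernel of the restriction $\res\colon \CH^{i}(E/B)\to \CH^{i}(G/B)$ to the split form; for a spinor group this torsion is $2$-primary, so it suffices to bound the power of $2$ annihilating it. The hypothesis $n\geq [i/2]+2$ (respectively $n\geq [i/2]+1$) is arranged so that $n\geq [j/2]+2$ (respectively $n\geq [j/2]+1$) for every $3\leq j\leq i$, and hence Corollary \ref{gammatorsion} supplies an annihilator $2^{g(j)}$ of $\operatorname{Tors}(\Gamma^{j}K(E/B)/\Gamma^{j+1}K(E/B))$ for each such $j$.

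First I would pass from Chow groups to the topological (codimension-of-support) filtration $\tau^{*}K(E/B)$. The natural surjection $\CH^{i}(E/B)\to \tau^{i}K(E/B)/\tau^{i+1}K(E/B)$ has kernel annihilated by $(i-1)!$ (Riemann--Roch without denominators), so on $2$-primary parts this contributes exactly the summand $v_{2}((i-1)!)$ to $t(i)$. It therefore remains to bound the $2$-torsion of the graded pieces $\tau^{i}/\tau^{i+1}$.

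Next, I would exploit the inclusion $\Gamma^{i}K(E/B)\subseteq \tau^{i}K(E/B)$ together with the fact that over $F_{\sep}$ both filtrations coincide with the torsion-free codimension filtration. A dévissage along the inclusions $\Gamma^{j}\subseteq \tau^{j}$ then expresses each $\tau^{i}/\tau^{i+1}$ as an iterated extension whose subquotients are controlled by the graded pieces $\Gamma^{j}/\Gamma^{j+1}$ for $2\leq j\leq i$. Feeding in the annihilators $2^{g(j)}$ from Corollary \ref{gammatorsion} for $3\leq j\leq i$, and the elementary bound $2^{1}$ for the low-degree contribution $j=2$ (which accounts for the leading $+1$), the accumulated annihilator of $\operatorname{Tors}(\tau^{i}/\tau^{i+1})$ is $2^{1+\sum_{j=3}^{i}g(j)}$. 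Combining with the previous step yields $2^{t(i)}$ with $t(i)=1+\sum_{j=3}^{i}g(j)+v_{2}((i-1)!)$, the entire computation being valid in every degree precisely because Theorem \ref{mainthm} removes the restriction $i\leq 4$ present in \cite{BNZ}.

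The main obstacle I expect is the dévissage step: making the comparison between the gamma and topological filtrations precise enough to track the torsion bounds through the successive extensions without acquiring spurious factors, so that the accumulation is exactly $\sum_{j=3}^{i}g(j)$ rather than a larger sum, and confirming that no torsion beyond the $2$-primary part can enter. Once this extension problem is controlled, the bookkeeping of the $(i-1)!$ factor and of the base case is routine, and the result follows by the same reasoning as the corresponding theorem of \cite{BNZ}.
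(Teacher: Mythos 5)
Your overall route is the paper's route: bound the torsion of $\CH^{i}(E/B)$ by $(i-1)!$ (contributing $v_{2}((i-1)!)$) times the exponents of the torsion of the graded gamma pieces in degrees $j\leq i$, feed in $2^{g(j)}$ for $3\leq j\leq i$ from Corollary \ref{gammatorsion} plus $2^{1}$ for $j=2$. The paper does not re-derive this annihilation fact; it simply cites \cite[p.149]{BNZ} for the statement that the torsion of $\CH^{i}(E/B)$ is killed by $(i-1)!\prod_{j}e(\Gamma^{j}K(E/B)/\Gamma^{j+1}K(E/B))$, whereas you reconstruct it via the topological filtration, Riemann--Roch without denominators, and a d\'evissage between $\Gamma^{*}$ and $\tau^{*}$ --- that is exactly the content of the cited fact, so your sketch there is acceptable even though you flag the d\'evissage as unverified.

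However, there is one genuine gap, and it is precisely the step the paper's proof consists of: Corollary \ref{gammatorsion} is stated for the \emph{split} variety $G/B$, but you apply it directly to $\Gamma^{j}K(E/B)/\Gamma^{j+1}K(E/B)$ for the twisted form $E/B$. As written, nothing in your argument licenses this transfer. The paper bridges it by \cite[Theorem 2.2(2)]{Panin}, which gives
\[\Gamma^{j}K(G/B)/\Gamma^{j+1}K(G/B)\simeq \Gamma^{j}K(E/B)/\Gamma^{j+1}K(E/B),\]
so that the split-form bounds $2^{g(j)}$ apply verbatim to the twisted form; without this (or an equivalent substitute) your accumulation of annihilators never reaches $E/B$ at all. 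On the other hand, your explicit observation that the torsion is $2$-primary (via the $2$-power torsion index of spinor groups), so that only $v_{2}((i-1)!)$ rather than all of $(i-1)!$ enters $t(i)$, is a point the paper leaves implicit in its citation of \cite{BNZ}, and it is correct and needed. Repair the proposal by inserting Panin's isomorphism before invoking Corollary \ref{gammatorsion}, and the argument matches the paper's.
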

\begin{proof}
By \cite[Theorem 2.2(2)]{Panin}, we have \[\Gamma^{i}K(G/B)/\Gamma^{i+1}K(G/B)\simeq \Gamma^{i}K(E/B)/\Gamma^{i+1}K(E/B).\] As the torsion of $\CH^{i}(E/B)$ is annihilated by \[(i-1)!\prod_{j=1}^{i}e(\Gamma^{i}K(E/B)/\Gamma^{i+1}K(E/B)),\] where $e(\Gamma^{i}K(E/B)/\Gamma^{i+1}K(E/B))$ denotes the finite exponent of its torsion subgroup (see \cite[p.149]{BNZ}), the result follows from Corollary \ref{gammatorsion}.
\end{proof}

\end{document}